\author{N.B-S. Boer$^*$ \and A.E. Sterk\footnote{Bernoulli Institute for Mathematics, Computer Science, and Artificial Intelligence, 
University of Groningen, PO Box 407, 9700 AK Groningen, The Netherlands. E-mail: \texttt{n.b.boer@student.rug.nl}, \texttt{a.e.sterk@rug.nl}. Corresponding author: A.E.~Sterk.}}
\title{Generalized Fibonacci numbers and extreme value laws for the R\'enyi map}
\date{\today}
\newtheorem{theorem}{Theorem}[section]
\newtheorem{lemma}[theorem]{Lemma}
\newtheorem{definition}[theorem]{Definition}
\DeclareMathOperator{\RE}{Re}
\DeclareMathOperator{\prob}{\mathbb{P}}
\DeclareMathOperator{\mean}{\mathbb{E}}
\DeclareMathOperator{\Leb}{Leb}
\begin{document}

\maketitle

\begin{abstract}
In this paper we prove an extreme value law for a stochastic process obtained by iterating the R\'enyi map $x \mapsto \beta x \pmod 1$, where we assume that $\beta>1$ is an integer. Haiman (2018) derived a recursion formula for the Lebesgue measure of threshold exceedance sets. We show how this recursion formula is related to a rescaled version of the $k$-generalized Fibonacci sequence. For the latter sequence we derive a Binet formula which leads to a closed-form expression for the distribution of partial maxima of the stochastic process. The proof of the extreme value law is completed by deriving sharp bounds for the dominant root of the characteristic polynomial associated with the Fibonacci sequence.
\end{abstract}

\tableofcontents

\newpage


\section{Introduction}

Extreme value theory for a sequence of i.i.d.\ random variables $(X_i)_{i=0}^\infty$ studies the asymptotic distribution of the partial maximum
\begin{equation}\label{partmax}
M_n = \max(X_0,\dots,X_{n-1})
\end{equation}
as $n\to\infty$. Since the distribution of $M_n$ has a degenerate limit it is necessary to consider a rescaling. Under appropriate conditions there exist sequences $a_n > 0$ and $b_n\in\mathbb{R}$ for which the limiting distribution of $a_n(M_n-b_n)$ is nondegenerate. As an elementary example, assume that the variables $X_i\sim U(0,1)$ are independent. Then with $a_n=n$ and $b_n=1$ it follows for $\lambda \geq 0$ that
\begin{equation}\label{uniform_iid}
\lim_{n\to\infty} \prob(a_n(M_n-b_n) \leq -\lambda)
 = \lim_{n\to\infty} \prob\bigg(M_n \leq 1 - \frac{\lambda}{n}\bigg)
 = \lim_{n\to\infty}\bigg(1-\frac{\lambda}{n}\bigg)^n
 = e^{-\lambda}.
\end{equation}
More generally, it can be proven that extreme value distributions for i.i.d.\ random variables are either a Weibull, Gumbel, or Fr\'echet distribution \cite{Galambos, HF:2006, Res87}. For extensions of extreme value theory to dependent random variables, see \cite{Leadbetter}.

In the last twenty years the applicability of extreme value theory has been extended to the setting of deterministic dynamical systems. The pioneering work \cite{Collet:01} introduced many ideas that were used in subsequent papers by various authors. A particularly important development was proving the link between hitting and return time statistics on the one hand and extreme value laws on the other hand \cite{FFT:10}. Hence, extreme value laws can be proven by using the many results on hitting and return time statistics that are available. The latter have been derived for  general classes of dynamical systems \cite{Abadi:04, HLV:05, HV:09, Hirata:93, HSV:99, Rousseau:14} and go beyond the context of the piecewise linear maps that will be considered in the present paper. For a detailed account on the subject of extremes in dynamical systems the interested reader is referred to the recent monograph \cite{Lucarini2016} and the extensive list of references therein.

In this paper we consider the R\'enyi map \cite{Renyi:57} given by
\[
f : [0,1) \to [0,1), \quad f(x) = \beta x \pmod 1,
\]
where we restrict to the case where $\beta>1$ is an integer. This map is an active topic of study within the field of dynamical systems and ergodic theory. In the special case $\beta=2$ the map $f$ is also known as the doubling map which is an archetypical example of a chaotic dynamical system \cite{BroerTakens:11}. Other applications, in which also non-integer values of $\beta$ are considered, include the study of random number generators~\cite{Addabbo-et-al:07} and dynamical systems with holes in their state space \cite{KKLL:2019}.

The assumption that $\beta > 1$ is an integer implies that the Lebesgue measure is an invariant probability measure of the map $f$:
\begin{lemma}\label{id}
If $X$ is a random variable such that $X \sim U(0,1)$, then $f(X) \sim U(0,1)$.
\end{lemma}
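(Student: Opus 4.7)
The plan is to verify that the pushforward measure $f_*\Leb$ equals $\Leb$ on $[0,1)$, which is equivalent to the stated distributional identity. I would carry this out via the cumulative distribution function, since it makes the piecewise-linear structure of $f$ completely transparent.

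First, I would fix $t \in [0,1)$ and compute $\prob(f(X) \leq t) = \Leb(\{x \in [0,1) : \beta x \pmod 1 \leq t\})$. The key observation is that because $\beta$ is a positive integer, the interval $[0,1)$ partitions into the $\beta$ branches $I_k = [k/\beta, (k+1)/\beta)$ for $k = 0, 1, \dots, \beta-1$, and on each $I_k$ the map $f$ acts as the affine bijection $x \mapsto \beta x - k$ onto $[0,1)$. Therefore
\[
\{x \in I_k : f(x) \leq t\} = \left[\frac{k}{\beta}, \frac{k+t}{\beta}\right],
\]
which has Lebesgue measure $t/\beta$.

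Next, since the branches $I_k$ are pairwise disjoint, I would sum over $k$ to obtain $\prob(f(X) \leq t) = \sum_{k=0}^{\beta-1} t/\beta = t$, which is exactly the cumulative distribution function of the uniform distribution on $[0,1)$. This establishes $f(X) \sim U(0,1)$.

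There is no real obstacle here: the argument is essentially bookkeeping, and everything works cleanly because $\beta$ is an integer (otherwise the top branch would be incomplete and one would have to handle a fractional piece separately). One could equivalently invoke the standard change-of-variables formula for the Perron--Frobenius operator to see that the constant density $1$ is a fixed point, but the direct CDF calculation is the shortest route and keeps the proof self-contained.
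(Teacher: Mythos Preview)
Your proof is correct and follows essentially the same approach as the paper: both compute the cumulative distribution function of $f(X)$ by decomposing the preimage $f^{-1}([0,t))$ into $\beta$ intervals of length $t/\beta$, one on each linear branch, and summing. The only differences are cosmetic (indexing the branches from $0$ versus from $1$, and using closed versus half-open intervals, which is immaterial for Lebesgue measure).
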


\begin{proof}
For $u \in (0,1]$ we have that $\prob(X \in [0,u)) = u$. This gives
\[
\prob(f(X) \in [0,u))
 = \prob(X \in f^{-1}([0,u))
 = \sum_{k=1}^\beta \prob\bigg(X \in \bigg[\frac{k-1}{\beta},\frac{k-1+u}{\beta}\bigg)\bigg) = u,
\]
which implies that $f(X) \sim U(0,1)$.
\end{proof}

Consider the stochastic process $(X_i)_{i=0}^\infty$ defined by $X_{i+1} = f(X_i)$, where $X_0 \sim U(0,1)$. Lemma~\ref{id} implies that the variables $X_i$ are identically distributed, but they are no longer independent. Let $M_n$ be the partial maximum as defined in \eqref{partmax}. Haiman~\cite{Haiman:2018} proved the following result:
\begin{theorem}\label{main_result}
For fixed $\lambda>0$ and the sequence $n_k = \lfloor \beta^k\lambda\rfloor$ it follows that
\[
\lim_{k\to\infty} \prob(M_{n_k} \leq 1 - \beta^{-k}) = e^{-\frac{\beta-1}{\beta}\lambda}.
\]
\end{theorem}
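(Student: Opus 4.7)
The plan is to encode the event $\{M_n \le 1-\beta^{-k}\}$ combinatorially via the base-$\beta$ expansion of $X_0$. Writing $X_0 = 0.x_1x_2\cdots$ in base $\beta$ (non-terminating), the digits $x_j$ are i.i.d.\ uniform on $\{0,\ldots,\beta-1\}$ under Lebesgue measure, and the iterate $f^i(X_0)$ has expansion $0.x_{i+1}x_{i+2}\cdots$. Up to a Lebesgue-null set, $f^i(X_0) > 1-\beta^{-k}$ holds exactly when $x_{i+1}=\cdots=x_{i+k}=\beta-1$. Hence $\{M_n\le 1-\beta^{-k}\}$ is equivalent to the event that the word $(x_1,\ldots,x_{n+k-1})$ contains no run of $k$ consecutive $(\beta-1)$'s.

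Let $N_m^{(k)}$ count such admissible words of length $m$. Decomposing on the position of the first digit that differs from $\beta-1$ yields the rescaled $k$-generalized Fibonacci recursion
\[
N_m^{(k)} = (\beta-1)\bigl(N_{m-1}^{(k)} + N_{m-2}^{(k)} + \cdots + N_{m-k}^{(k)}\bigr), \qquad m \ge k,
\]
with initial data $N_j^{(k)} = \beta^j$ for $0 \le j \le k-1$, together with the identity $\prob(M_n \le 1-\beta^{-k}) = N_{n+k-1}^{(k)}/\beta^{n+k-1}$. This is Haiman's recursion in the promised Fibonacci form. A Binet-type formula then gives
\[
N_m^{(k)} = \sum_{j=0}^{k-1} c_j(k)\, z_j(k)^m,
\]
where $z_j(k)$ are the roots of the characteristic polynomial $P_k(z) = z^k - (\beta-1)(z^{k-1}+\cdots+1)$ and $z_0 := z_0(k)$ denotes its dominant real root. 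Multiplying $P_k(z)=0$ by $(z-1)$ produces the useful scalar identity $(\beta-z_0)z_0^k = \beta-1$.

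From the latter identity I would derive sharp two-sided bounds showing $z_0/\beta = 1 - (\beta-1)\beta^{-(k+1)} + O\!\bigl(k\beta^{-(2k+1)}\bigr)$, so that for $n = n_k = \floor*{\beta^k\lambda}$ the standard $(1-x/N)^N$ limit gives $(z_0/\beta)^{n_k+k-1} \to \exp\bigl(-\lambda(\beta-1)/\beta\bigr)$; a companion calculation confirms $c_0(k) \to 1$. The step I expect to be the main obstacle is controlling the subdominant contributions $\sum_{j\ge 1} c_j(k)(z_j(k)/\beta)^{n_k+k-1}$ uniformly in $k$: although each ratio $|z_j(k)|/z_0(k)$ is strictly less than one, the number of subdominant roots grows with $k$ and the exponent $n_k + k - 1$ grows like $\beta^k$, so a quantitative spectral gap $|z_j(k)| \le (1-\delta_k) z_0(k)$ with matching control of the coefficients $c_j(k)$ is required. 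The sharp root bounds for $P_k$ advertised in the abstract should deliver precisely this gap, and combining it with the dominant-term asymptotics above completes the proof.
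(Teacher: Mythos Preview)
Your approach is essentially the paper's: both routes reduce $\prob(M_n\le 1-\beta^{-k})$ to a rescaled $k$-generalized Fibonacci sequence with characteristic polynomial $P_k(z)=z^k-(\beta-1)\sum_{i=0}^{k-1}z^i$, expand via a Binet formula, and study the dominant root. Your base-$\beta$ digit encoding is a clean, self-contained derivation of the recursion that the paper instead obtains by translating Haiman's formula for $B_n$; the two sequences are related by $N_m^{(k)}=F_{m+2}/(\beta-1)$, so the content is identical.

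Where your plan goes astray is in the assessment of the ``main obstacle''. You worry about a quantitative spectral gap $|z_j(k)|\le(1-\delta_k)z_0(k)$ for the subdominant roots and expect the sharp root bounds in the abstract to supply it. In fact the situation is much better: the paper shows (its Lemma~3.1, a standard argument via $q_k(z)=(z-1)P_k(z)=z^{k+1}-\beta z^k+\beta-1$) that \emph{all} non-dominant roots satisfy $|z_j(k)|<1$, not merely $|z_j(k)|<z_0(k)$. Since $z_0(k)\to\beta$, the relevant ratios obey $|z_j(k)|/\beta<1/\beta$, so each subdominant term is bounded by $|c_j(k)|\,\beta^{-(n_k+k-1)}$, and summing $k-1$ of these is harmless once you check that the Binet coefficients do not blow up. The paper handles the latter by the elementary estimate $|\beta+(k+1)(z_j(k)-\beta)|\ge |\beta+(k+1)(1-\beta)|$, valid because $\RE z_j(k)\in(-1,1)$, which yields $|c_j(k)|=O(1/k)$; hence the total subdominant contribution is $O(\beta^{-n_k})$ and vanishes. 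The sharp two-sided bounds advertised in the abstract concern only the \emph{dominant} root $z_0(k)$ and are used exactly where you already use them, namely to pin down $(z_0/\beta)^{n_k+k-1}\to e^{-(\beta-1)\lambda/\beta}$.
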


\noindent
Note that for $\lambda \in \mathbb{N}$ we have $\prob(M_{n_k} \leq 1 - \beta^{-k}) = \prob(\beta^k \lambda(M_{\beta^k\lambda}-1) \leq -\lambda)$. Therefore, the result of Theorem \ref{main_result} is in spirit similar to the example in \eqref{uniform_iid}, albeit that a subsequence of $M_n$ is considered.

The aim of this paper is to give an alternative proof for Theorem \ref{main_result} which relies on asymptotic properties of a rescaled version of the $k$-generalized Fibonacci numbers. The restriction that $\beta$ is an integer is essential for our proof. Indeed, for non-integer values of $\beta>1$ the invariant measure of the map $f$ is generally different from the Lebesgue measure; see \cite{Choe:05, Renyi:57} for the case $\beta = (\sqrt{5}+1)/2$. A more general approach to establish an extreme value law would be to show that two mixing conditions are satisfied which guarantee that an extreme value law for a time series generated by a dynamical system can be obtained as if it were an i.i.d.\ stochastic process. An application of this approach to the tent map process can be found in~\cite{Freitas:09}. However, in Appendix~\ref{sec:mixing} we show that one of these conditions does not hold the R\'enyi map process.

The fact that the limit in Theorem \ref{main_result} is not equal to $e^{-\lambda}$ has a particular statistical interpretation. The coefficient $\theta := (\beta-1)/\beta$ in the exponential is called the extremal index and measures the degree of clustering in extremes arising as a consequence of the dependence between the variables $X_i$; see \cite{Leadbetter, Lucarini2016} for more details. In Appendix~\ref{sec:clustering} we show how the extremal index for the R\'enyi map process can be derived in an elementary way. For more general dynamical systems, conditions for extreme value laws with particular extremal indices are derived in \cite{FFT:12}.


\section{The relation with generalized Fibonacci numbers}

In this section we fix the numbers $k \in \mathbb{N}$ and $u = \beta^{-k}$. For any integer $i \geq 0$ we define the set
\[
E_i = \{ x \in [0,1) \,:\, f^i(x) > 1-u \},
\]
where the dependence on $k$ is suppressed in the notation for convenience. Then
\[
\prob(M_n \leq 1-u) = 1 - B_n
\quad\text{where}\quad
B_n = \Leb\bigg(\bigcup_{i=0}^{n-1} E_i\bigg),
\]
where $\Leb$ denotes the Lebesgue measure. Based on self-similarity arguments Haiman~\cite{Haiman:2018} derived the following recursion formula which holds for each fixed $k \in \mathbb{N}$:
\begin{align}
B_n		& = (n-1)\frac{\beta-1}{\beta}u + u		& \text{if } & 1 \leq n \leq k+1,		\label{regular} \\
B_{n+1}	& = B_n + \frac{\beta-1}{\beta}u(1-B_{n-k})	& \text{if } & n \geq k+1.		\label{recursion}
\end{align}
The same idea was used earlier by Haiman to study extreme value laws for the tent map \cite{Haiman:03}.

For $n \in \mathbb{Z}$ we define the following numbers:
\begin{equation}
\label{fseq}
F_n = 
\begin{cases}
0 & \text{if } n < 1, \\
1 & \text{if } n = 1, \\
\displaystyle\frac{B_n-B_{n-1}}{u/\beta^{n-1}} & \text{if } n > 1.
\end{cases}
\end{equation}
These numbers have the following geometric meaning. Note that the sets $E_i$ can be written as a union of $\beta^i$ intervals:
\[
E_i = \bigcup_{j=1}^{\beta^i} \bigg[\frac{j-u}{\beta^i}, \frac{j}{\beta^i}\bigg), \quad i \geq 0.
\]
For $n \geq 2$ the number $F_n$ equals the number of subintervals of the set $E_{n-1}$ which need to be added to $E_0 \cup \dots \cup E_{n-2}$ in order to obtain $E_0 \cup \dots \cup E_{n-1}$. Figure \ref{fig_intervals} illustrates this for the special case $\beta=2$ and $k=2$.

\begin{figure}
\includegraphics[width=\textwidth]{./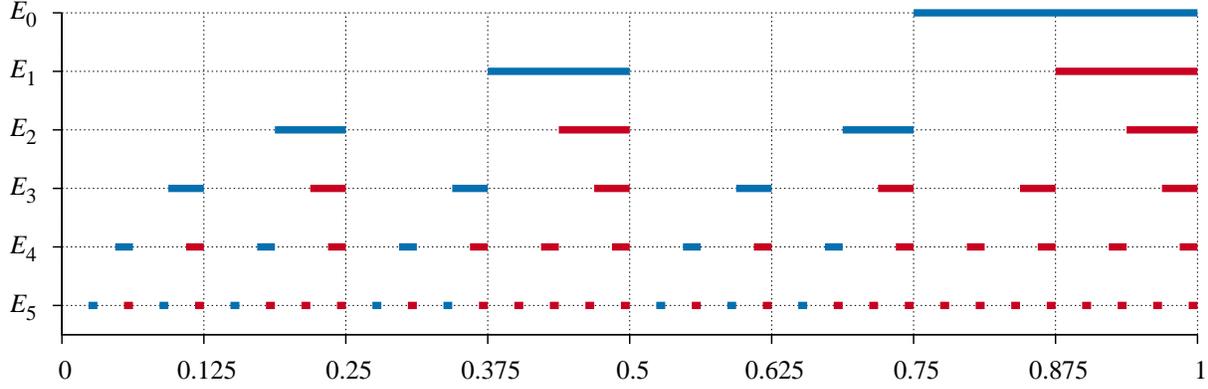}
\caption{Illustration of the sets $E_0,\dots,E_5$ for $\beta=2$ and $k=2$. Each set $E_n$ is a union of $\beta^n$ intervals. Intervals in $E_n$ which are disjoint from (resp.\ contained in) the intervals comprising $E_0,\dots,E_{n-1}$ are drawn in blue (resp.\ red). For $n \geq 2$ the number $F_n$ equals the number of subintervals of the set $E_{n-1}$ which need to be added to $E_0 \cup \dots \cup E_{n-2}$ in order to obtain $E_0 \cup \dots \cup E_{n-1}$. The figure clearly shows that $F_2=1$, $F_3=2$, $F_4=3$, $F_5=5$, and $F_6=8$ which are the starting numbers of the Fibonacci sequence.}
\label{fig_intervals}
\end{figure}

\begin{lemma}\label{prob_fib}
For any $k,n \in \mathbb{N}$ it follows that
\[
\prob (M_n \leq 1 - \beta^{-k}) = \frac{\beta^{1-n-k}}{\beta-1} F_{n+k+1}.
\]
\end{lemma}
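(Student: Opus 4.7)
The plan is to derive the claim as a direct algebraic consequence of Haiman's recursion \eqref{recursion} combined with the scaling built into the definition \eqref{fseq} of $F_n$. Because $\prob(M_n \le 1-\beta^{-k}) = 1 - B_n$, the target identity is equivalent to
\[
1 - B_n \;=\; \frac{\beta^{1-n-k}}{\beta-1}\, F_{n+k+1},
\]
so I will aim to verify this reformulation.

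First, I would unfold the definition \eqref{fseq} at index $n+k+1$. Since $n \ge 1$ we have $n+k+1 > 1$, and (with $u = \beta^{-k}$)
\[
F_{n+k+1} \;=\; \frac{B_{n+k+1} - B_{n+k}}{u/\beta^{n+k}} \;=\; \bigl(B_{n+k+1} - B_{n+k}\bigr)\,\frac{\beta^{n+k}}{u}.
\]
Next I would apply the recursion \eqref{recursion} to rewrite the numerator. Substituting the index $n+k$ in place of $n$ in \eqref{recursion} is allowed because $n+k \ge k+1$ (as $n \ge 1$), and yields
\[
B_{n+k+1} - B_{n+k} \;=\; \frac{\beta-1}{\beta}\, u\,(1-B_n).
\]

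Plugging this into the previous display, the factor of $u$ cancels:
\[
F_{n+k+1} \;=\; \frac{\beta-1}{\beta}\, u\,(1 - B_n) \cdot \frac{\beta^{n+k}}{u} \;=\; (\beta-1)\,\beta^{n+k-1}\,(1-B_n).
\]
Solving for $1-B_n$ and recalling $\prob(M_n \le 1-\beta^{-k}) = 1-B_n$ delivers the stated formula. There is essentially no obstacle: the non-trivial content lies entirely in Haiman's recursion \eqref{recursion} and in the fact that the denominators $u/\beta^{n-1}$ in the definition of $F_n$ are chosen precisely so that the $u$ and $\beta$ factors from \eqref{recursion} line up correctly; once those ingredients are in place the derivation is a one-line substitution. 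I would only be careful to note that the argument uses \eqref{recursion} at index $n+k$, which imposes the harmless condition $n\ge 1$ already present in the statement.
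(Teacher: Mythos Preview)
Your argument is correct and essentially identical to the paper's own proof: both combine the definition \eqref{fseq} with the recursion \eqref{recursion} to obtain $F_{n+k+1} = (\beta-1)\beta^{n+k-1}(1-B_n)$ and then solve for $1-B_n$. The only cosmetic difference is that the paper first derives the relation for a generic index $m\ge k+2$ and substitutes $m=n+k+1$ at the end, whereas you perform the index shift up front.
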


\begin{proof}
For $n \geq k+2$ equation \eqref{recursion} gives
\[
F_n = \frac{B_n - B_{n-1}}{u/\beta^{n-1}} = (\beta-1)\beta^{n-2}(1-B_{n-k-1}),
\]
or, equivalently,
\[
B_{n-k-1} = 1 - \frac{\beta^{2-n}}{\beta-1}F_n.
\]
The proof is completed by substituting $n$ for $n-k-1$.
\end{proof}

The following result provides the connection between the sequence $(B_n)$ and generalizations of the Fibonacci numbers. In particular, for $\beta=2$ the sequence $(F_n)$ is the well-known $k$-generalized Fibonacci sequence.

\begin{lemma}\label{fib}
The following statements are equivalent:
\begin{enumerate}
\item[(i)]  Equations \eqref{regular} and \eqref{recursion} hold;
\item[(ii)] For fixed $k \in \mathbb{N}$, the sequence $(F_n)$, where $n \in \mathbb{Z}$, defined in \eqref{fseq} satisfies
\begin{equation}\label{genfib}
F_n = 
\begin{cases}
0 & \text{if } n < 1, \\
1 & \text{if } n = 1, \\
(\beta-1)(F_{n-1} + F_{n-2} + \dots + F_{n-k})  & \text{if } n \geq 2.
\end{cases}
\end{equation}
In particular, $F_n = (\beta-1)\beta^{n-2}$ for $2 \leq n \leq k+1$.
\end{enumerate}
\end{lemma}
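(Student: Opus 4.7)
The plan is to exploit the fact that \eqref{fseq} sets up a bijection between $(B_n)$ and $(F_n)$: it rearranges to $B_n - B_{n-1} = F_n u\beta^{1-n}$ for $n\geq 2$, which together with $B_0 = 0$ and $F_1 = 1$ telescopes to $B_n = u\sum_{j=1}^{n} F_j \beta^{1-j}$. Hence each recursion is an algebraic transcription of the other, and I would prove both implications by substitution and a short induction.

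For (i)$\Rightarrow$(ii), in the range $2\leq n\leq k+1$ equation \eqref{regular} gives $B_n - B_{n-1} = (\beta-1)u/\beta$, so $F_n = (\beta-1)\beta^{n-2}$ by \eqref{fseq}; a geometric-sum check, together with the boundary values $F_1 = 1$ and $F_j = 0$ for $j < 1$, then verifies \eqref{genfib} directly on this range. For $n \geq k+2$ I would shift \eqref{recursion} to the form $F_n = (\beta-1)\beta^{n-2}(1 - B_{n-k-1})$ and form $F_n - \beta F_{n-1}$; the constant term cancels and one more use of \eqref{fseq} produces the two-term relation
\[
F_n = \beta F_{n-1} - (\beta-1) F_{n-k-1}, \qquad n \geq k+2.
\]
Subtracting \eqref{genfib} at index $n-1$ from \eqref{genfib} at index $n$ yields exactly this same two-term relation, so the two recursions are equivalent given the common base values.

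For the converse (ii)$\Rightarrow$(i), I would run the argument backwards: the geometric-sum calculation still gives $F_n = (\beta-1)\beta^{n-2}$ on $2\leq n\leq k+1$ from \eqref{genfib}, and telescoping via \eqref{fseq} from $B_1 = u$ reproduces \eqref{regular}. For \eqref{recursion} I would prove the equivalent identity $F_m = (\beta-1)\beta^{m-2}(1 - B_{m-k-1})$ for $m \geq k+2$ by induction on $m$, expanding the right-hand side with $B_{m-k-1} = u\sum_{j=1}^{m-k-1} F_j \beta^{1-j}$; the base case $m = k+2$ is a direct check using $F_{k+2} = (\beta-1)(\beta^k - 1)$, and the inductive step reduces via one subtraction to the two-term relation above, which itself comes from \eqref{genfib} by first-differencing.

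The hard part, I expect, is not conceptual but careful bookkeeping at the boundary $n = k+1,\,k+2$, where \eqref{regular} meets \eqref{recursion} and where the convention $F_j = 0$ for $j < 1$ truncates the sum in \eqref{genfib}. The single computation that ties both inductions together is the consistency check $F_{k+2} = (\beta-1)(\beta^k - 1)$, which must agree whether one reads it off the $B$-side or the $F$-side.
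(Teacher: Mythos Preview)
Your proposal is correct and follows essentially the same route as the paper: both directions hinge on the two-term relation $F_n = \beta F_{n-1} - (\beta-1)F_{n-k-1}$, obtained on one side from \eqref{recursion} via \eqref{fseq} and on the other by first-differencing \eqref{genfib}, with the induction anchored at the boundary computation $F_{k+2} = (\beta-1)(\beta^k-1)$. The paper writes out the inductive steps explicitly where you invoke the equivalence of the two recursions given common initial data, but the underlying argument is the same.
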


\begin{proof}
Assume that statement (i) holds. By definition $F_1 = 1$ and for $2 \leq n \leq k+1$ equation \eqref{regular} implies that
\[
F_n = \frac{B_n - B_{n-1}}{u/\beta^{n-1}} = \frac{\beta^{n-1}}{u}\bigg[ \bigg((n-1)\frac{\beta-1}{\beta}u + u\bigg) - \bigg((n-2)\frac{\beta-1}{\beta}u+u\bigg) \bigg] = (\beta-1)\beta^{n-2}.
\]

We proceed with induction on $n$. For any $n \geq k+1$ equation \eqref{recursion} gives
\begin{equation}\label{Fnp1}
F_{n+1} = \frac{B_{n+1} - B_n}{u/\beta^n} = (\beta-1)\beta^{n-1}(1-B_{n-k}).
\end{equation}
In particular, for $n=k+1$ we have
\[
\begin{split}
F_{k+2} 
 & = (\beta-1)\beta^{k}(1-B_{1}) \\
 & = (\beta-1)(\beta^k - 1) = (\beta-1)^2\sum_{i=1}^k \beta^{k-i} = (\beta-1)\sum_{i=1}^k F_{k+2-i}.
\end{split}
\]
Assume that for some $n\geq k+1$ it follows that
\[
F_{n+1} = (\beta-1)\sum_{i=1}^k F_{n+1-i}.
\]
First using equation \eqref{fseq} and then equation \eqref{recursion} twice gives
\[
\begin{split}
F_{n+2}
 & = (\beta-1)\beta^n(1-B_{n-k+1}) \\
 & = (\beta-1)\beta^n(1-B_{n-k}) - (\beta-1)^2\beta^{n-k-1}(1-B_{n-2k}) \\
 & = \beta F_{n+1} - (\beta-1)F_{n-k+1},
\end{split}
\]
where the last equality follows from \eqref{Fnp1}. Finally, the induction hypothesis implies that
\[
\begin{split}
F_{n+2}
 & = (\beta-1)F_{n+1} + F_{n+1} - (\beta-1)F_{n-k+1} \\
 & = (\beta-1)F_{n+1} + (\beta-1)\sum_{i=1}^k F_{n+1-i} - (\beta-1)F_{n-k+1} \\
 & = (\beta-1)\sum_{i=1}^k F_{n+2-i}.
\end{split}
\]
Hence, statement (ii) follows.

Conversely, assume that statement (ii) holds. In particular, $F_n = (\beta-1)\beta^{n-2}$ for $2 \leq n \leq k+1$ so that by equation \eqref{fseq} it follows that
\[
B_n = B_{n-1} + \frac{u}{\beta^{n-1}}F_n = B_{n-1} + \frac{\beta-1}{\beta}u.
\]
Equation \eqref{regular} now follows by recalling that $B_1 = u$.

We proceed by strong induction on $n$. We have
\[
F_{k+2} = (\beta-1)\sum_{i=1}^k F_{k+2-i} = (\beta-1)\sum_{i=1}^k (\beta-1)\beta^{k-i} = (\beta-1)(\beta^{k}-1).
\]
Recalling that $B_1 = u = \beta^{-k}$, equation \eqref{fseq} implies that
\[
B_{k+2} = B_{k+1} + \frac{u}{\beta^{k+1}}F_{k+2} = B_{k+1} + \frac{u}{\beta^{k+1}}(\beta-1)(\beta^k-1) = B_{k+1} + \frac{\beta-1}{\beta}u(1 - B_1),
\]
which shows that equation \eqref{recursion} holds for $n=k+1$. Assume that there exists $m\in\mathbb{N}$ such that \eqref{recursion} holds for all $k+1 \leq n \leq m$. Observe that
\[
\begin{split}
F_{m+2}
 & = (\beta-1)\sum_{i=1}^k F_{m+2-i} \\
 & = (\beta-1)\bigg(F_{m+1} - F_{m+1-k} + \sum_{i=1}^k F_{m+1-i}\bigg) \\
 & = (\beta-1)\bigg(F_{m+1} - F_{m+1-k} + \frac{F_{m+1}}{\beta-1}\bigg) \\
 & = \beta F_{m+1} - (\beta-1)F_{m+1-k}.
\end{split}
\]
Therefore,
\[
\begin{split}
B_{m+2} - B_{m+1}
 & = \frac{u}{\beta^{m+1}}F_{m+2} \\
 & = \frac{u}{\beta^{m+1}}(\beta F_{m+1} - (\beta-1)F_{m+1-k}) \\
 & = B_{m+1} - B_m - \frac{\beta-1}{\beta^{k+1}}(B_{m+1-k}-B_{m-k}).
\end{split}
\]
The induction hypothesis gives
\[
\begin{split}
B_{m+2}-B_{m+1}
 & = \frac{\beta-1}{\beta}u(1-B_{m-k}) - \frac{(\beta-1)^2}{\beta^{k+2}}u(1-B_{m-2k}) \\
 & = \frac{\beta-1}{\beta}u\bigg(1-\bigg(B_{m-k} + \frac{\beta-1}{\beta}u(1-B_{m-2k})\bigg)\bigg) \\
 & = \frac{\beta-1}{\beta}u(1-B_{m-k+1}).
\end{split}
\]
Hence, statement (i) follows.
\end{proof}


\section{The Binet formula}

Let the sequence $(F_n)$ be as defined in \eqref{genfib}, where $\beta\geq 2$ is assumed to be an integer. In this section we will derive a closed-form expression for $F_n$ as a function of $n$ along the lines of Spickerman and Joyner~\cite{SpickermanJoyner:84} and Dresden and Du \cite{DresdenDu2014}. Levesque \cite{Levesque:85} derived a closed-form expression for sequences of the form \eqref{genfib} in which each term is multiplied with a different factor. Another interesting paper by Wolfram \cite{Wolfram:98} considers explicit formulas for the $k$-generalized Fibonacci sequence with arbitrary starting values, but we will not pursue those ideas here. 

The characteristic polynomial corresponding to the recursion relation \eqref{genfib} is given by
\begin{equation}
\label{polybeta}
p_k(x) = x^k - (\beta-1)\sum_{i=0}^{k-1} x^i.
\end{equation}
The following result concerns properties of the roots of this polynomial. The proof closely follows Miller \cite{Miller:71}. For alternative proofs for the special case $\beta=2$, see \cite{Miles:60, Wolfram:98}.

\begin{lemma}\label{lemma_simple_roots}
Let $k \geq 2$ and $\beta \geq 2$ be integers. Then
\begin{enumerate}
\item[(i)]   the polynomial $p_k$ has a real root $1 < r_{k,1} < \beta$;
\item[(ii)]  the remaining roots $r_{k,2},\dots,r_{k,k}$ of $p_k$ lie within the unit circle of the complex plane;
\item[(iii)] the roots of $p_k$ are simple.
\end{enumerate}
\end{lemma}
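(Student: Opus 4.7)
The plan is to pass from $p_k$ to the simpler polynomial
\[
q_k(x) := (x-1)\,p_k(x) = x^{k+1} - \beta x^k + (\beta-1),
\]
whose $k+1$ roots consist of those of $p_k$ together with a simple root at $x = 1$ (no cancellation occurs, since $p_k(1) = 1 - k(\beta-1) \neq 0$ for $k,\beta \geq 2$). Part (i) then follows from the intermediate value theorem applied to $p_k$: one has $p_k(1) = 1 - k(\beta-1) < 0$ and, by telescoping, $p_k(\beta) = \beta^k - (\beta-1)(\beta^k-1)/(\beta-1) = 1 > 0$, producing a real root $r_{k,1} \in (1,\beta)$.

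For part (ii), I would first rule out roots of $q_k$ on the unit circle other than $z=1$. Rearranging $q_k(z)=0$ as $z - \beta = -(\beta-1)z^{-k}$ and imposing $|z|=1$ yields $|z-\beta| = \beta - 1$; combining the two circle equations algebraically forces $\RE(z)=1$, hence $z=1$. The key step is then a Rouch\'e argument on the slightly enlarged contour $|z|=1+\delta$, comparing $q_k$ with $h(z) := -\beta z^k + (\beta - 1)$. Here $|q_k - h| = (1+\delta)^{k+1}$ and the reverse triangle inequality gives $|h(z)| \geq \beta(1+\delta)^k - (\beta-1)$, so the required strict bound reduces to $(1+\delta)^k(\beta - 1 - \delta) > \beta - 1$. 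This is an equality at $\delta = 0$ but has derivative $k(\beta-1) - 1 > 0$ there, so it holds for all small $\delta > 0$. Since $h$ has its $k$ zeros on the circle $|z| = ((\beta-1)/\beta)^{1/k} < 1$, Rouch\'e delivers exactly $k$ zeros of $q_k$ in $|z| < 1 + \delta$. Choosing $\delta < r_{k,1} - 1$ excludes $r_{k,1}$ from the interior, and the root $z = 1$ accounts for one of the interior zeros, so the remaining $k-1$ roots $r_{k,2},\dots,r_{k,k}$ must lie in $|z| < 1 + \delta$; sending $\delta \downarrow 0$ and invoking the unit-circle exclusion yields $|r_{k,j}| < 1$ for $j \geq 2$.

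For part (iii), I would consider common roots of $q_k$ and $q_k'(x) = x^{k-1}\bigl((k+1)x - \beta k\bigr)$. Since $q_k(0) = \beta - 1 \neq 0$, any common root must equal $c^* := \beta k/(k+1)$, and a direct substitution yields
\[
q_k(c^*) = (\beta - 1) - \frac{\beta^{k+1} k^k}{(k+1)^{k+1}}.
\]
Thus (iii) reduces to the integer inequality $\beta^{k+1} k^k \neq (\beta-1)(k+1)^{k+1}$ for $k,\beta \geq 2$. Monotonicity of $\beta^{k+1}/(\beta-1)$ in $\beta$ (via a routine derivative check) reduces this to the case $\beta = 2$, where $2^{k+1} k^k > (k+1)^{k+1}$ for $k \geq 2$ is a routine inequality, verifiable by a short induction on $k$.

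The main obstacle I anticipate is the Rouch\'e step in (ii): on the natural contour $|z| = 1$ one only obtains $|q_k - h| = 1 = \min |h|$ (the minimum attained at the $k$-th roots of unity), so Rouch\'e does not apply directly; enlarging the contour by $\delta$ and exploiting the sign of the first-order term in $\delta$ is precisely what rescues the strict inequality. The remaining ingredients---the intermediate value theorem, an algebraic rearrangement, and an elementary real-variable inequality---are standard.
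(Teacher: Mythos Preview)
Your proof is correct. Part (i) coincides with the paper's, but (ii) and (iii) take different routes. For (ii), the paper proceeds by elementary triangle-inequality estimates on $p_k$ and $q_k$, together with Descartes' rule, excluding roots region by region (first $|r| > r_{k,1}$, then $1 < |r| < r_{k,1}$, then $|r| \in \{1, r_{k,1}\}$); your Rouch\'e argument on $|z| = 1+\delta$ is less elementary but more streamlined, delivering the root count in one stroke once the comparison function $h$ is chosen and the boundary case $|z|=1$ is handled. For (iii), the paper appeals to the Rational Root Theorem---any rational root of $q_k$ must be an integer dividing $\beta-1$---and asserts that $c^* = \beta k/(k+1)$ fails this test; however, when $\beta = k+1$ one has $c^* = k$, which \emph{is} an integer dividing $\beta-1 = k$, so that argument is actually incomplete in this family of cases (it can be patched by the direct check $q_k(k) = k(1-k^{k-1}) \neq 0$). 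Your explicit evaluation $q_k(c^*) = (\beta-1) - \beta^{k+1}k^k/(k+1)^{k+1}$ followed by the monotonicity reduction to $\beta = 2$ and the inequality $2^{k+1}k^k > (k+1)^{k+1}$ is a bit more laborious, but it treats all $(\beta,k)$ uniformly and avoids this gap.
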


\begin{proof}
(i) Descartes' rule of signs implies that $p_k$ has exactly one positive root $r_{k,1}$. Since
\[
p_k(1) = 1-k(\beta-1) < 0 \quad\text{and}\quad p_k(\beta)=1
\]
the Intermediate Value Theorem implies the existence of a root $1 < r_{k,1} < \beta$.

(ii) Define the polynomial
\[
q_k(x) = (x-1) p_k(x) = x^{k+1} - \beta x^k  + \beta-1,
\]
and make the following observations:
\begin{itemize}
\item[(O1)] if $x > r_{k,1}$, then $p_k(x) > 0$, and if $0 < x < r_{k,1}$, then $p_k(x) < 0$;
\item[(O2)] if $x > r_{k,1}$, then $q_k(x) > 0$, and if $1 < x < r_{k,1}$, then $q_k(x) < 0$.
\end{itemize}

Note that $p_k$ has no root $r$ such that $|r| > r_{k,1}$. Indeed, if such a root exists, then $p_k(r)=0$, or, equivalently, $r^k = (\beta-1)\sum_{i=0}^{k-1} r^i$. The triangle inequality gives $|r|^k \leq (\beta-1)\sum_{i=0}^{k-1} |r|^i$. Hence, $p_k(|r|) \leq 0$, which contradicts observation~(O1).

In addition, $p_k$ has no root $r$ with $1 < |r| < r_{k,1}$. Indeed, if such a root exists, then $q_k(r) = (r-1)p_k(r) = 0$ so that $\beta r^k = r^{k+1} + \beta-1$. The triangle inequality implies that $\beta |r|^k \leq |r|^{k+1} + \beta-1$. Hence, $q_k(|r|) \geq 0$, which contradicts observation~(O2).

Finally, $p_k$ has no root $r$ with either $|r|=1$ or $|r| = r_{k,1}$ but $r \neq r_{k,1}$. Indeed, if such a root exists, then $q_k(r)=(r-1)p_k(r)=0$, which implies $\beta r^k = r^{k+1} + \beta-1$ and
\begin{equation}\label{ineq}
\beta|r|^k = |r^{k+1} + \beta-1| \leq |r|^{k+1} + \beta-1.
\end{equation}
If the inequality in \eqref{ineq} is strict, then $q_k(|r|) > 0$. Since $q_k(1) = 0$ and $q_k(r_{k,1})=0$ it then follows that $|r| \neq 1$ and $|r| \neq r_{k,1}$. If the inequality in \eqref{ineq} is an equality, then $r^{k+1}$ must be real. Since $q_k(r)=0$, it follows that $r^k = ((\beta-1)+r^{k+1})/\beta$ is real as well and hence $r$ itself is real. An application of Descartes' rule of signs to $q_k$ implies that when $k$ is even $p_k$ has one negative root, and when $k$ is odd $p_k$ has no negative root. If $k$ is even, then $p_k(0) = -(\beta-1)$ and $p_k(-1) = 1$. By the Intermediate Value Theorem it follows that $-1 < r < 0$. We conclude that no root of $p_k$, except $r_{k,1}$ itself, has absolute value 1 or $r_{k,1}$.

(iii) If $p_k$ has a multiple root, then so has $q_k$. In that case, there exists $r$ such that $q_k(r) = q_k'(r) = 0$. Note that $q_k'(r)=0$ implies that $r=0$ or $r=\beta k/(k+1)$. Clearly, $r=0$ is not a root of $q_k$. By the Rational Root Theorem it follows that the only rational roots of $q_k$ can be integers that divide $\beta-1$. Hence, $r=\beta k/(k+1)$ is not a root of $q_k$ either. We conclude that $q_k$, and thus $p_k$, cannot have multiple roots.
\end{proof}

The proof of the following result closely follows the method of Spickerman and Joyner~\cite{SpickermanJoyner:84} and then uses a rewriting step as in Dresden and Du \cite{DresdenDu2014}.

\begin{lemma}\label{binet}
The sequence $(F_n)$ as defined in \eqref{genfib} is given by the following Binet formula:
\[
F_n = \sum_{j=1}^k \frac{r_{k,j} - 1}{\beta + (k+1)(r_{k,j}-\beta)}r_{k,j}^{n-1},
\]
where $r_{k,1},\dots,r_{k,k}$ are the roots of the polynomial $p_k$ defined in \eqref{polybeta}.
\end{lemma}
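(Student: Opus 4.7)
The plan is to use the generating function method: derive a closed form for the formal series $G(x) = \sum_{n=1}^\infty F_n x^n$, decompose it by partial fractions to produce $F_n$ as a linear combination of the powers $r_{k,j}^{n-1}$, and then rewrite the resulting coefficients in the form demanded by the statement using the auxiliary polynomial $q_k(x) = (x-1)p_k(x)$ that already appeared in the proof of Lemma~\ref{lemma_simple_roots}.

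Concretely, I would multiply the recurrence in \eqref{genfib} by $x^n$, sum over $n \geq 2$, and use the initial value $F_1 = 1$ together with the zero-padding $F_m = 0$ for $m < 1$ to obtain the functional equation
\[
G(x)\bigl[1 - (\beta-1)(x + x^2 + \cdots + x^k)\bigr] = x.
\]
The bracket equals $x^k\, p_k(1/x)$, which factors as $\prod_{j=1}^k (1 - r_{k,j} x)$, so $G(x) = x\,/\prod_j (1 - r_{k,j} x)$. Lemma~\ref{lemma_simple_roots}(iii) then permits a clean partial fraction decomposition $G(x) = \sum_j A_j/(1 - r_{k,j}x)$; the standard residue formula yields $A_j = r_{k,j}^{k-2}/p_k'(r_{k,j})$, and extracting the coefficient of $x^n$ produces
\[
F_n = \sum_{j=1}^k \frac{r_{k,j}^{k-1}}{p_k'(r_{k,j})}\, r_{k,j}^{n-1}.
\]

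It remains to identify $r_{k,j}^{k-1}/p_k'(r_{k,j})$ with $(r_{k,j}-1)\,/\,\bigl(\beta + (k+1)(r_{k,j}-\beta)\bigr)$. For this I would exploit the factorisation $q_k(x) = (x-1)p_k(x) = x^{k+1} - \beta x^k + (\beta-1)$. Differentiating gives $q_k'(x) = p_k(x) + (x-1)p_k'(x)$; evaluating at a root $r_{k,j}$ of $p_k$ yields the identity $p_k'(r_{k,j}) = q_k'(r_{k,j})/(r_{k,j} - 1)$. A direct computation then gives $q_k'(r_{k,j}) = r_{k,j}^{k-1}\bigl((k+1)r_{k,j} - k\beta\bigr)$, and the trivial rearrangement $(k+1)r_{k,j} - k\beta = \beta + (k+1)(r_{k,j} - \beta)$ matches the denominator in the claimed Binet formula.

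The only delicate point I anticipate is bookkeeping in the generating function derivation: the recursion \eqref{genfib} only holds for $n \geq 2$ and the lower terms $F_m$ with $m - i < 1$ must be set to zero, so some care is needed to confirm that the telescoped equation really has $x$ on the right-hand side (and not an extra polynomial correction coming from the boundary). Once this is settled, every remaining step is an algebraic manipulation of rational functions and of the polynomial pair $p_k$, $q_k$ already introduced in Lemma~\ref{lemma_simple_roots}.
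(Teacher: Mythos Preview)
Your proposal is correct and follows essentially the same approach as the paper: both derive the generating function, perform a partial-fraction decomposition using the simplicity of the roots (Lemma~\ref{lemma_simple_roots}(iii)), and then massage the resulting coefficients into the stated form. The only noteworthy difference is in that last algebraic step: the paper telescopes the sum $(1-1/r_{k,j})\sum_{i=1}^k i\,r_{k,j}^{1-i}$ directly and then invokes $q_k(r_{k,j})=0$, whereas your route---differentiating $q_k(x)=(x-1)p_k(x)$ and evaluating at a root of $p_k$ to obtain $p_k'(r_{k,j}) = q_k'(r_{k,j})/(r_{k,j}-1)$---is a somewhat cleaner shortcut that avoids the telescoping computation entirely.
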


\begin{proof}
The generating function of the sequence $(F_n)$ is given by
\[
G(x) = \sum_{n=0}^\infty F_{n+1} x^n.
\]
The equation
\[
\sum_{n=k}^\infty \bigg(F_{n+1} - (\beta-1)\sum_{i=1}^k F_{n+1-i}\bigg)x^n = 0
\]
leads to
\[
G(x) = \sum_{n=0}^{k-1} F_{n+1}x^n - (\beta-1)\sum_{i=1}^{k-1}\sum_{n=0}^{k-i-1} F_{n+1}x^n + (\beta-1)G(x)\sum_{i=1}^k x^i.
\]
Finally, using that $F_1=1$ and $F_n=(\beta-1)\beta^{n-2}$ for $2 \leq n \leq k-1$ implies that
\[
G(x) = \frac{1}{1 - (\beta-1)\sum_{i=1}^k x^i}.
\]

Note that $1/r$ is a root of the denominator of $G$ if and only if $r$ is a root of the characteristic polynomial $p_k$. By Lemma \ref{lemma_simple_roots} part (iii) we can expand the generating function in terms of partial fractions as follows:
\[
G(x) = \sum_{j=1}^k \frac{c_j}{x - 1/r_{k,j}},
\]
where the coefficients are given by
\[
c_j = \lim_{x\to 1/r_{k,j}} \frac{x-1/r_{k,j}}{1 - (\beta-1)\sum_{i=1}^k x^i} = -\frac{1}{(\beta-1)\sum_{i=1}^k i (1/r_{k,j})^{i-1}}.
\]
Observe that
\[
\begin{split}
\bigg(1 - \frac{1}{r_{k,j}}\bigg)\sum_{i=1}^k i\bigg(\frac{1}{r_{k,j}}\bigg)^{i-1}
 & = \sum_{i=1}^k \bigg[ i\bigg(\frac{1}{r_{k,j}}\bigg)^{i-1} - (i+1)\bigg(\frac{1}{r_{k,j}}\bigg)^{i}\bigg] + \sum_{i=1}^k \bigg(\frac{1}{r_{k,j}}\bigg)^{i} \\
 & = 1 - (k+1)\frac{1}{r_{k,j}^k} + \frac{1}{\beta-1}.
\end{split}
\]
This results in
\[
c_j = -\frac{1-1/r_{k,j}}{\beta - (\beta-1)(k+1)/r_{k,j}^k}.
\]
Since $r_{k,j}^{k+1}-\beta r_{k,j}^k + \beta - 1 = (r_{k,j}-1)p(r_{k,j}) = 0$ it follows that $r_{k,j}-\beta = (1-\beta)/r_{k,j}^k$ so that
\[
c_j = -\frac{1-1/r_{k,j}}{\beta + (k+1)(r_{k,j}-\beta)}.
\]
Finally, we have that
\[
G(x) = \sum_{j=1}^k c_j \bigg(-r_{k,j}\sum_{n=0}^\infty r_{k,j}^n x^n \bigg)
=
\sum_{n=0}^\infty \bigg(-\sum_{j=1}^k c_j r_{k,j}^{n+1}\bigg)x^n.
\]
Substituting the values for the coefficients completes the proof.
\end{proof}

For the special case $\beta = 2$ Dresden and Du \cite{DresdenDu2014} go one step further and  derive the following simplified Binet formula:
\[
F_n = \bigg\lfloor\frac{r_{k,1} - 1}{\beta + (k+1)(r_{k,1}-\beta)}r_{k,1}^{n-1} + \frac{1}{2}\bigg\rfloor
\quad\text{for}\quad
n \geq k-2,
\]
where $r_{k,1}$ is the unique root of $p_k$ for which $1 < r_{k,1} < \beta$; see Lemma \ref{lemma_simple_roots}. We expect that this formula can be proven for all integers $\beta>1$ for $n$ sufficiently large, where the lower bound on $n$ may depend on both $\beta$ and $k$. However, we will not pursue this question in this paper.


\section{Exponentially growing sequences}

In preparation to the proof of Theorem \ref{main_result} we will prove two facts on sequences that exhibit exponential growth. The first result is a variation on a well-known limit:

\begin{lemma}\label{exponential}
If $(a_k)$ is a sequence such that $\lim_{k\to\infty} k a_k = c$, then
\[
\lim_{k\to\infty} (1-a_k)^k = e^{-c}.
\]
\end{lemma}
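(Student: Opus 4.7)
The plan is to take logarithms and reduce the statement to showing that $k\log(1-a_k) \to -c$, after which exponentiating yields the conclusion. First I would observe that the hypothesis $ka_k \to c$ forces $a_k \to 0$: otherwise the product $ka_k$ could not converge to a finite limit. Consequently $|a_k| < 1/2$ for all sufficiently large $k$, so that $\log(1-a_k)$ is well defined and $(1-a_k)^k > 0$ for such $k$.

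The main step is then a controlled first-order expansion of the logarithm. For $|x| \leq 1/2$ the Taylor series $\log(1-x) = -x - x^2/2 - x^3/3 - \cdots$ yields a bound of the form $|\log(1-x) + x| \leq x^2$. Applying this with $x = a_k$ gives
\[
k\log(1-a_k) = -ka_k + R_k, \qquad |R_k| \leq k a_k^2 = (ka_k)\cdot a_k.
\]
Because $ka_k$ is bounded (it converges to $c$) and $a_k \to 0$, the remainder $R_k$ tends to $0$. Therefore $k\log(1-a_k) \to -c$, and the lemma follows by exponentiating.

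There is no substantial obstacle here. The only point that requires a moment of care is the observation that $a_k \to 0$ automatically, which legitimises the logarithm and its Taylor expansion, together with the small trick of writing the remainder as the product of the bounded factor $ka_k$ and the vanishing factor $a_k$ rather than trying to control $k a_k^2$ directly.
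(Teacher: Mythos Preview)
Your proof is correct. The paper takes a different, slightly more elementary route: it fixes $\varepsilon>0$, uses $|ka_k-c|\leq\varepsilon$ to sandwich $(1-a_k)^k$ between $\bigl(1-(c+\varepsilon)/k\bigr)^k$ and $\bigl(1-(c-\varepsilon)/k\bigr)^k$, invokes the classical limit $(1-x/k)^k\to e^{-x}$ on both sides, and then lets $\varepsilon\downarrow 0$. Your argument via $k\log(1-a_k)=-ka_k+O(ka_k^2)$ is equally short and arguably more transparent, since it gives a quantitative rate and avoids the $\varepsilon$--$\liminf/\limsup$ bookkeeping; the paper's version, on the other hand, never needs the Taylor estimate for $\log(1-x)$ and reduces everything to the single well-known limit. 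Both approaches rely on the same preliminary observation that $a_k\to 0$, which you state explicitly and the paper leaves implicit.
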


\begin{proof}
Let $\varepsilon>0$ be arbitrary. Then there exists $N \in \mathbb{N}$ such that $|k a_k - c| \leq \varepsilon$, or, equivalently,
\[
\bigg(1-\frac{c+\varepsilon}{k}\bigg)^k \leq (1-a_k)^k \leq \bigg(1-\frac{c-\varepsilon}{k}\bigg)^k
\]
for all $k \geq N$. Hence, we obtain
\[
e^{-(c+\varepsilon)} \leq \liminf_{k\to\infty}\, (1-a_k)^k \leq \limsup_{k\to\infty}\, (1-a_k)^k \leq e^{-(c-\varepsilon)}.
\]
Since $\varepsilon>0$ is arbitrary, the result follows.
\end{proof}

The next result provides sufficient conditions under which the difference of two exponentially increasing sequences grows at a linear rate:

\begin{lemma}\label{exponentialdiff}
If $a>1$ and $(b_k)$ is a positive sequence such that $\lim_{k\to\infty} a^k b_k = c$, then
\[
\lim_{k\to\infty} \frac{a^k - (a - b_k)^k}{k} = \frac{c}{a}.
\]
\end{lemma}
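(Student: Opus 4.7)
The plan is to use the algebraic identity
\[
a^k - (a-b_k)^k = b_k \sum_{i=0}^{k-1} a^{k-1-i}(a-b_k)^i,
\]
which reduces the problem to estimating a sum of $k$ positive terms, each of which sits between $(a-b_k)^{k-1}$ and $a^{k-1}$ (once $k$ is large enough that $0 < a - b_k < a$, which is fine because the hypothesis $a^k b_k \to c$ forces $b_k \to 0$). Dividing by $k$ yields the sandwich
\[
b_k (a-b_k)^{k-1} \;\leq\; \frac{a^k - (a-b_k)^k}{k} \;\leq\; b_k\, a^{k-1}.
\]

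For the upper bound I would rewrite $b_k a^{k-1} = (a^k b_k)/a$, which tends to $c/a$ by assumption, so the limit superior is at most $c/a$.

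For the lower bound I factor $b_k (a-b_k)^{k-1} = \tfrac{1}{a}(a^k b_k)\, (1 - b_k/a)^{k-1}$, and the job is then to show that $(1-b_k/a)^{k-1}\to 1$. Here I would invoke Lemma \ref{exponential} with $a_k := b_k/a$ and the constant $0$: indeed, $k a_k = k b_k/a \sim (kc)/a^{k+1} \to 0$ because $a > 1$, so $(1-b_k/a)^k \to e^0 = 1$, and the extra factor $(1-b_k/a)^{-1} \to 1$ is harmless. Hence the lower bound also tends to $c/a$, and the squeeze theorem delivers the claimed limit.

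The only mildly delicate point is checking that $k b_k \to 0$ so that Lemma \ref{exponential} applies; this is where the hypothesis $a>1$ (rather than merely $a \geq 1$) is used, as it ensures $k/a^k \to 0$. Everything else is a routine geometric-sum factorization followed by a squeeze, so I do not anticipate any real obstacle.
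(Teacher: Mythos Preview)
Your proof is correct and follows essentially the same approach as the paper: both start from the factorization $a^k-(a-b_k)^k = b_k\sum_{i=0}^{k-1}a^{k-1-i}(a-b_k)^i$, both reduce to a squeeze argument, and both hinge on the observation that $kb_k\to 0$ (which is where $a>1$ enters). The only cosmetic difference is that the paper factors out $a^{k-1}$ to write the quotient as $\tfrac{a^kb_k}{a}\cdot S_k$ with $S_k=\tfrac{1}{k}\sum_{i=0}^{k-1}(1-b_k/a)^i$ and then bounds $S_k$ via Bernoulli's inequality, whereas you bound each summand by the two extreme terms and invoke Lemma~\ref{exponential} for the lower bound; the substance is the same.
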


\begin{proof}
The algebraic identity
\[
x^k - y^k = (x-y)\sum_{i=0}^{k-1} x^{k-1-i} y^{i}
\]
leads to
\[
\frac{a^k - (a - b_k)^k}{k} =\frac{a^k b_k}{a} \cdot S_k
\quad\text{where}\quad S_k = \frac{1}{k} \sum_{i=0}^{k-1} \bigg(1- \frac{b_k}{a}\bigg)^{i}.
\]
It suffices to show that $\lim_{k\to\infty} S_k = 1$. To that end, note that the assumption implies that $\lim_{k\to\infty} b_k = 0$ so that $-1 < -b_k / a < 0$ for $k$ sufficiently large. Bernoulli's inequality gives
\[
1 - i\frac{b_k}{a} \leq \bigg(1- \frac{b_k}{a}\bigg)^{i} < 1,
\]
which implies that
\[
1 - \frac{k-1}{2}\cdot\frac{b_k}{a} < S_k < 1
\]
for $k$ sufficiently large. Moreover, the assumption implies that $\lim_{k\to\infty} kb_k = 0$. An application of the Squeeze Theorem completes the proof.
\end{proof}


\section{Proof of the extreme value law}

Let $\lambda>0$ and define $n_k = \lfloor \beta^k \lambda\rfloor$. Combining Lemma \ref{prob_fib} and \ref{binet} gives
\[
\prob(M_{n_k} \leq 1 - \beta^{-k})
=
\frac{\beta}{\beta-1} \sum_{i=1}^k a_i(k)
\quad\text{where}\quad
a_i(k) = \frac{r_{k,i}-1}{\beta + (k+1)(r_{k,i}-\beta)}\bigg(\frac{r_{k,i}}{\beta}\bigg)^{n_k+k},
\]
where $r_{k,i}$ are the roots of $p_k$. Recall that $r_{k,1}$ is the unique root in the interval $(1,\beta)$, and that $|r_{k,i}|<1$ for $i=2,\dots,k$.
In the remainder of this section Theorem \ref{main_result} will be proven by a careful analysis of the asymptotic behaviour of the dominant root $r_{k,1}$.

We define the following numbers:
\[
r_{k,{\rm min}} = \beta - \frac{\beta-1}{\beta^k-1}(1 + \beta^{-k/2})
\quad\text{and}\quad
r_{k,{\rm max}} = \beta - \frac{\beta-1}{\beta^k-1}.
\]
The number $r_{k,{\rm max}}$ is obtained by applying a single iteration of Newton's method to $p_k$ using the starting point $x=\beta$. The number $r_{k,{\rm min}}$ is a correction of $r_{k,{\rm max}}$ with an exponentially decreasing factor.

\begin{lemma}\label{rootlemma_beta}
If $\beta \geq 2$ is an integer and $k \in \mathbb{N}$ is sufficiently large, then
\begin{enumerate}
\item[(i)]   $p_k(r_{k,{\rm max}}) > 0$;
\item[(ii)]  $p_k(r_{k,{\rm min}}) < 0$;
\item[(iii)] $r_{k,{\rm min}} < r_{k,1} < r_{k,{\rm max}}$.
\end{enumerate}
\end{lemma}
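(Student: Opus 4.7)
The plan is to rewrite everything in terms of $q_k(x) := (x-1)p_k(x) = x^k(x-\beta) + (\beta-1)$. Since $r_{k,\min}$ and $r_{k,\max}$ both exceed $1$ for $k$ large, $p_k$ and $q_k$ share signs at these points, so (i) and (ii) become questions about $q_k$. Direct substitution yields the clean identities
\[
q_k(r_{k,\max}) = (\beta-1)\left[1 - \frac{r_{k,\max}^k}{\beta^k-1}\right], \qquad q_k(r_{k,\min}) = (\beta-1)\left[1 - \frac{(1+\beta^{-k/2})\,r_{k,\min}^k}{\beta^k-1}\right].
\]
Hence (i) reduces to $\beta^k - r_{k,\max}^k > 1$, and (ii) reduces to $(1+\beta^{-k/2})\,r_{k,\min}^k > \beta^k - 1$, or equivalently $r_{k,\min}^k\,\beta^{-k/2} > (\beta^k - r_{k,\min}^k) - 1$.

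The common tool I would use for both estimates is the Mean Value Theorem applied to $x \mapsto x^k$, which gives $k r^{k-1}(\beta-r) < \beta^k - r^k < k\beta^{k-1}(\beta - r)$ for $1 < r < \beta$. For (i), the lower bound together with $\beta - r_{k,\max} = (\beta-1)/(\beta^k-1)$ produces $\beta^k - r_{k,\max}^k > k r_{k,\max}^{k-1}(\beta-1)/(\beta^k-1)$. Since $\beta - r_{k,\max} = O(\beta^{-k})$, Bernoulli's inequality applied to $(r_{k,\max}/\beta)^{k-1}$ shows this factor tends to $1$, so the bound is asymptotic to $k(\beta-1)/\beta$, which certainly exceeds $1$ for $k$ large. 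For (ii), the upper MVT bound gives $\beta^k - r_{k,\min}^k = O(k)$, while the same Bernoulli argument forces $r_{k,\min}^k \geq \beta^k/2$ for $k$ large, and hence $r_{k,\min}^k\,\beta^{-k/2} \geq \beta^{k/2}/2$. Since this lower bound grows exponentially and the competing quantity $\beta^k - r_{k,\min}^k - 1$ is only $O(k)$, the required inequality holds for all sufficiently large $k$.

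Finally, (iii) follows immediately from (i) and (ii): by the Intermediate Value Theorem there is a root of $p_k$ in $(r_{k,\min}, r_{k,\max}) \subset (0,\infty)$, and Lemma \ref{lemma_simple_roots}(i) states that the only positive root of $p_k$ is $r_{k,1}$, so $r_{k,\min} < r_{k,1} < r_{k,\max}$. The main obstacle in this plan is part (ii): both sides of the reduced inequality blow up like $\beta^k$, and the slack that makes the argument close comes precisely from the correction factor $1 + \beta^{-k/2}$ in the definition of $r_{k,\min}$, which must decay slowly enough that $\beta^{-k/2} r_{k,\min}^k$ still grows super-polynomially while the defect $\beta^k - r_{k,\min}^k$ stays only linear in $k$.
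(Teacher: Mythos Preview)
Your proof is correct and takes essentially the same approach as the paper: both reduce (i) and (ii) to showing that $\beta^k - r_{k,\max}^k$ (and the analogous quantity for $r_{k,\min}$) grows linearly in $k$, established via Bernoulli-type estimates, and both deduce (iii) by the Intermediate Value Theorem together with uniqueness of the positive root. The only cosmetic differences are that the paper evaluates $p_k$ directly (using $p_k(x)=\tfrac{1}{1-x}\big((\beta-x)x^k-(\beta-1)\big)$, which is your $q_k$ identity divided by $x-1$) and packages the linear-growth step as the separate Lemma~\ref{exponentialdiff}, whereas you obtain the same bounds inline via the Mean Value Theorem.
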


\begin{proof}
(i) For $x \neq 1$ we have
\[
p_k(x) = x^k - (\beta-1)\sum_{i=0}^{k-1} x^i = x^k - (\beta-1)\frac{1-x^k}{1-x} = \frac{1}{1-x}\big((\beta-x)x^k-(\beta-1)\big).
\]
In particular, for $k \geq 2$ it follows that
\[
p_k(r_{k,{\rm max}}) = \frac{1}{\beta^k-2}\bigg[\beta^k-\bigg(\beta - \frac{\beta-1}{\beta^k-1}\bigg)^k - 1\bigg].
\]
It suffices to show that the expression between brackets is positive for $k$ sufficiently large. Lemma \ref{exponentialdiff} gives
\[
\lim_{k\to\infty} \frac{1}{k}\bigg(\beta^k-\bigg(\beta - \frac{\beta-1}{\beta^k-1}\bigg)^k\bigg) = \frac{\beta-1}{\beta}.
\]
Hence, for $k$ sufficiently large it follows that
\[
\beta^k-\bigg(\beta - \frac{\beta-1}{\beta^k-1}\bigg)^k - 1 \geq \frac{\beta-1}{2\beta}k - 1,
\]
and the right-hand side is positive for $k > 2\beta / (\beta-1)$.

(ii) Similar to the proof of part (i) it follows that
\[
p_k(r_{k,{\rm min}}) = \frac{1}{2 + \beta^{-k/2} - \beta^k}\bigg[\bigg(\beta-\frac{\beta-1}{\beta^k-1}(1+\beta^{-k/2})\bigg)^k(1+\beta^{-k/2})-\beta^k+1\bigg].
\]
It suffices to show that the expression between brackets is positive for $k$ sufficiently large. Lemma \ref{exponentialdiff} gives
\[
\lim_{k\to\infty} \frac{1}{k}\bigg(\beta^k - \bigg(\beta-\frac{\beta-1}{\beta^k-1}(1+\beta^{-k/2})\bigg)^k\bigg) = \frac{\beta-1}{\beta}.
\]
Hence, for $k$ sufficiently large it follows that
\[
\beta^k - \bigg(\beta-\frac{\beta-1}{\beta^k-1}(1+\beta^{-k/2})\bigg)^k \leq k.
\]
This gives
\[
\begin{split}
& \bigg(\beta-\frac{\beta-1}{\beta^k-1}(1+\beta^{-k/2})\bigg)^k(1+\beta^{-k/2})-\beta^k+1 \\
 & \hspace{3cm} = \beta^{k/2} + 1 - (1+\beta^{-k/2})\bigg(\beta^k - \bigg(\beta-\frac{\beta-1}{\beta^k-1}(1+\beta^{-k/2})\bigg)^k\bigg) \\
 & \hspace{3cm} \geq \beta^{k/2} + 1 - (1+\beta^{-k/2})k,
\end{split}
\]
and the right-hand side is positive for $k$ sufficiently large.

(iii) By the Intermediate Value Theorem there exists a point $c \in (r_{k,{\rm min}}, r_{k,{\rm max}})$ such that $p_k(c)=0$. Note that $c > 1$ for $k$ sufficiently large. Since $r_{k,1}$ is the only zero of $p_k$ which lies outside the unit circle it follows that $c = r_{k,1}$.
\end{proof}

\noindent
In the particular, for $\beta=2$ the previous result improves the bound $2(1-2^{-k}) < r_{1,k} < 2$ derived by Wolfram \cite{Wolfram:98}.

\begin{lemma}\label{a_limit}
We have that
\[
\lim_{k\to\infty} a_1(k)
=
\frac{\beta-1}{\beta}e^{-\frac{\beta-1}{\beta}\lambda}.
\]
\end{lemma}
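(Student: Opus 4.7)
My plan is to split $a_1(k)=C(k)\cdot P(k)$, where $C(k)=(r_{k,1}-1)/[\beta+(k+1)(r_{k,1}-\beta)]$ is the rational prefactor and $P(k)=(r_{k,1}/\beta)^{n_k+k}$ is the power, and to compute the two limits separately using the sharp estimate $r_{k,{\rm min}}<r_{k,1}<r_{k,{\rm max}}$ from Lemma~\ref{rootlemma_beta}. Since both $r_{k,{\rm min}}$ and $r_{k,{\rm max}}$ converge to $\beta$, so does $r_{k,1}$, which forces the numerator of $C(k)$ to tend to $\beta-1$. The upper bound gives $|r_{k,1}-\beta|\le(\beta-1)(1+\beta^{-k/2})/(\beta^k-1)=O(\beta^{-k})$, so $(k+1)(r_{k,1}-\beta)\to 0$, and thus $C(k)\to(\beta-1)/\beta$.

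For the power, set $b_k:=1-r_{k,1}/\beta$. Dividing the sandwich bounds on $\beta-r_{k,1}$ by $\beta$ yields
\[
\frac{\beta-1}{\beta(\beta^k-1)}<b_k<\frac{\beta-1}{\beta(\beta^k-1)}\bigl(1+\beta^{-k/2}\bigr).
\]
Multiplying through by $n_k+k=\lfloor\beta^k\lambda\rfloor+k$ and using $(n_k+k)/(\beta^k-1)\to\lambda$ together with $\beta^{-k/2}\to 0$, the squeeze theorem gives $(n_k+k)b_k\to(\beta-1)\lambda/\beta$. Lemma~\ref{exponential} is formulated for an exponent $k$ matching the subscript, but its proof adapts verbatim: whenever $m_k\to\infty$ and $m_kb_k\to c$, the inequalities $(1-(c+\varepsilon)/m_k)^{m_k}\le(1-b_k)^{m_k}\le(1-(c-\varepsilon)/m_k)^{m_k}$ hold for large $k$, and the classical exponential limit forces $(1-b_k)^{m_k}\to e^{-c}$. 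Applied with $m_k=n_k+k$ and $c=(\beta-1)\lambda/\beta$ this yields $P(k)\to\exp(-(\beta-1)\lambda/\beta)$, and multiplying the two limits produces the claimed value of $\lim_{k\to\infty}a_1(k)$.

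The only step that really requires care is deducing $(n_k+k)b_k\to(\beta-1)\lambda/\beta$, because $n_k$ grows like $\beta^k$ and any slack in the bounds on $r_{k,1}$ could in principle be amplified. What rescues the argument is the exponentially small factor $\beta^{-k/2}$ separating $r_{k,{\rm min}}$ from $r_{k,{\rm max}}$: after multiplication by $n_k+k\sim\beta^k\lambda$, the gap between upper and lower bounds is of order $\beta^{-k/2}\to 0$, so Lemma~\ref{rootlemma_beta} is exactly sharp enough to pin down the limit. Once this is in hand, the adaptation of Lemma~\ref{exponential} and the limit of $C(k)$ are routine continuity arguments.
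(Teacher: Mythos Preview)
Your proof is correct and follows the same strategy as the paper: use Lemma~\ref{rootlemma_beta} to show that the prefactor tends to $(\beta-1)/\beta$, then sandwich $1-r_{k,1}/\beta$ between the two explicit bounds and invoke (a mild generalization of) Lemma~\ref{exponential} to handle the power. The only cosmetic difference is that the paper sandwiches $(r_{k,1}/\beta)^{n_k+k}$ directly between $(1-a_k)^{\beta^k\lambda-1+k}$ and $(1-b_k)^{\beta^k\lambda+k}$ and applies the exponential limit on each side, whereas you first squeeze $(n_k+k)\bigl(1-r_{k,1}/\beta\bigr)$ to $(\beta-1)\lambda/\beta$ and then apply the exponential limit once; both routes rely on the same $\beta^{-k/2}$ gap in Lemma~\ref{rootlemma_beta} and are equivalent.
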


\begin{proof}
From Lemma \ref{rootlemma_beta} it follows for sufficiently large $k$ that
\begin{equation}\label{root_bound_beta}
\beta - \frac{\beta-1}{\beta^k-1}(1+\beta^{-k/2}) < r_{k,1} < \beta - \frac{\beta-1}{\beta^k-1}.
\end{equation}
In particular, this implies
\[
\lim_{k\to\infty} r_{k,1} = \beta
\quad\text{and}\quad
\lim_{k\to\infty} (k+1)(r_{k,1}-\beta) = 0
\]
so that
\begin{equation}
\label{limit_unity_beta}
\lim_{k\to\infty} \frac{r_{k,1}-1}{\beta+(k+1)(r_{k,1}-\beta)} = \frac{\beta-1}{\beta}.
\end{equation}
Define the sequences
\[
a_k = \frac{\beta-1}{\beta^{k+1}-\beta}(1+\beta^{-k/2})
\quad\text{and}\quad
b_k = \frac{\beta-1}{\beta^{k+1}-\beta}.
\]
The inequality $\beta^k\lambda-1 \leq n_k \leq \beta^k\lambda$ combined with \eqref{root_bound_beta} implies that
\begin{equation}
\label{final_squeeze_beta}
(1-a_k)^{\beta^k \lambda-1+k} \leq \bigg(\frac{r_{k,1}}{\beta}\bigg)^{n_k+k} \leq (1 - b_k)^{\beta^k\lambda+k}.
\end{equation}
By Lemma \ref{exponential} it follows that
\[
\lim_{k\to\infty} (1 - b_k)^{\beta^{k+1}} = e^{-\frac{\beta-1}{\beta}}
\quad\text{and}\quad
\lim_{k\to\infty} (1 - b_k)^{k} = 1,
\]
which implies that
\[
\lim_{k\to\infty} (1 - b_k)^{\beta^k\lambda+k} = e^{-\frac{\beta-1}{\beta}\lambda}.
\]
A similar result holds for the sequence $(a_k)$. Hence, \eqref{limit_unity_beta} together with the Squeeze Theorem applied to \eqref{final_squeeze_beta} completes the proof.
\end{proof}

\begin{lemma}\label{a_bound}
For $k$ sufficiently large we have that
\[
|a_i(k)| < \frac{2}{|\beta + (k+1)(1-\beta)|}\cdot\frac{1}{\beta^{n_k+k}} \quad\text{for}\quad i=2,\dots,k.
\]
\end{lemma}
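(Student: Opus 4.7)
The plan is to estimate each of the three factors of $a_i(k)$ separately. Fix $i\in\{2,\dots,k\}$ and, for brevity, write $r=r_{k,i}$. By Lemma~\ref{lemma_simple_roots}(ii) we have $|r|<1$, so the triangle inequality immediately yields $|r-1|\le|r|+1<2$, and the geometric factor satisfies
\[
\bigg|\frac{r}{\beta}\bigg|^{n_k+k}<\frac{1}{\beta^{n_k+k}}.
\]
The only substantive task is therefore to show that the denominator is bounded below by the target quantity:
\[
|\beta+(k+1)(r-\beta)|\ge|\beta+(k+1)(1-\beta)|.
\]

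For the denominator bound, I would rewrite $\beta+(k+1)(z-\beta)=-k\beta+(k+1)z$. Viewed as an affine map of $\mathbb{C}$, this sends the closed unit disc onto the closed disc of radius $k+1$ centered at $-k\beta$. For $k$ large enough that $k\beta>k+1$ (which already holds for every $k\ge 2$ and $\beta\ge 2$), the origin lies strictly outside this image disc, and its distance to the disc equals $k\beta-(k+1)$, uniquely attained at $z=1$. Consequently, for every $r$ with $|r|\le 1$ one has $|\beta+(k+1)(r-\beta)|\ge k\beta-(k+1)$, and since $\beta+(k+1)(1-\beta)=k+1-k\beta$ this is exactly the required inequality. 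A purely algebraic alternative writes $r=u+iv$ and expands
\[
|{-}k\beta+(k+1)(u+iv)|^2-(k\beta-(k+1))^2=2k\beta(k+1)(1-u)-(k+1)^2(1-u^2-v^2);
\]
the elementary identity $2(1-u)-(1-u^2-v^2)=(1-u)^2+v^2\ge 0$ gives $1-u^2-v^2\le 2(1-u)$, which combined with $k\beta\ge k+1$ shows that this difference is nonnegative.

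Multiplying the three factor estimates then yields the claimed strict inequality
\[
|a_i(k)|<\frac{2}{|\beta+(k+1)(1-\beta)|}\cdot\frac{1}{\beta^{n_k+k}}.
\]
The main obstacle is exactly the denominator estimate; once the geometric picture (or the algebraic identity above) is in hand, the rest is routine substitution. The hypothesis that $k$ is sufficiently large enters only to ensure $k\beta>k+1$, so that the right-hand side is positive and the geometric argument applies; this is automatic for $\beta\ge 3$, and holds for every $k\ge 2$ when $\beta=2$.
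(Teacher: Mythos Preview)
Your proof is correct and follows the same three-factor decomposition as the paper: the bounds $|r-1|<2$ and $(|r|/\beta)^{n_k+k}<\beta^{-(n_k+k)}$ are identical, and for the denominator both arguments establish $|\beta+(k+1)(r-\beta)|\ge|\beta+(k+1)(1-\beta)|$ for $|r|<1$. The only cosmetic difference is in that last step: the paper drops the imaginary part, writing $|f(z)|^2\ge(\beta+(k+1)(x-\beta))^2$ with $x=\RE z$, and then notes that this real quadratic is decreasing on $(-1,1)$ because its vertex $x_k=\beta k/(k+1)$ lies to the right of $1$ for large $k$, whereas your affine-image-of-the-unit-disc picture (or the accompanying algebraic identity) reaches the same bound a little more directly.
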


\begin{proof}
Using that $|r_{k,i}| < 1$ for $i=2,\dots,k$ gives
\[
|a_i(k)|
 = \frac{|r_{k,i}-1|}{|\beta + (k+1)(r_{k,i}-\beta)|}\cdot\bigg(\frac{|r_{k,i}|}{\beta}\bigg)^{n_k+k}
 < \frac{2}{|\beta + (k+1)(r_{k,i}-\beta)|}\cdot\frac{1}{\beta^{n_k+k}}.
\]
For $z \in \mathbb{C}$ we consider the function
\[
f(z) = \beta + (k+1)(z-\beta).
\]
Writing $z = x+iy$ gives
\[
\begin{split}
|f(z)|^2
 & = (\beta + (k+1)(x-\beta))^2 + (k+1)^2 y^2 \\
 & \geq (\beta + (k+1)(x-\beta))^2.
\end{split}
\]
The quadratic function in the right-hand side attains its minimum value at $x_k = \beta-\beta/(k+1)$, and for $k$ sufficiently large it follows that $x_k > 1$. Using that $\RE(r_{k,i}) \in (-1,1)$ gives
\[
|f(r_{k,i})| \geq |\beta + (k+1)(1-\beta)|.
\]
This completes the proof.
\end{proof}

From Lemma \ref{a_bound} it follows for $k$ sufficiently large that
\[
\bigg| \sum_{i=2}^k a_i(k)\bigg|
 \leq \sum_{i=2}^k |a_i(k)| 
 \leq \frac{2(k-1)}{|\beta + (k+1)(1-\beta)|}\cdot\frac{1}{\beta^{n_k+k}},
\]
so that Lemma \ref{a_limit} implies that
\[
\lim_{k\to\infty} \prob(M_{n_k} \leq 1-\beta^{-k})
=
\lim_{k\to\infty} \frac{\beta}{\beta-1}\sum_{i=1}^k a_i(k) = \lim_{k\to\infty} \frac{\beta}{\beta-1}a_1(k) = e^{-\frac{\beta-1}{\beta}\lambda},
\]
whereby Theorem \ref{main_result} has been proven.


\appendix


\section{The conditions $D(u_n)$ and $D'(u_n)$}
\label{sec:mixing}

A more general approach to study extreme value laws is to determine for which sequences $(u_n)$, depending on a parameter $\lambda\geq 0$, it follows that
\[
\lim_{n\to\infty} \prob(M_n \leq u_n) = e^{-\lambda}.
\]
In \cite[Theorem 1.5.1]{Leadbetter} the following equivalence is proven: if $(X_i)_{i=0}^\infty$ is an i.i.d.\ sequence of random variables and $\lambda \geq 0$, then
\begin{equation}
\label{eq:equiv_leadbetter}
\lim_{n\to\infty} \prob(M_n \leq u_n) = e^{-\lambda}
\quad\Leftrightarrow\quad
\lim_{n\to\infty} n\prob(X_0 > u_n) = \lambda.
\end{equation}
For example, if the variables $X_i\sim U(0,1)$ are independent, then with $u_n = 1-\lambda/n$ it clearly follows that $n\prob(X_0 > u_n) = \lambda$ for all $n$ and the left hand side of \eqref{eq:equiv_leadbetter} yields precisely the statement in \eqref{uniform_iid}.

When the variables $X_i$ are generated by a dynamical system, and therefore dependent, the equivalence \eqref{eq:equiv_leadbetter} need not hold in general and additional conditions need to be satisfied. Let $f : M \to M$ be a map on a manifold $M$ admitting an invariant Borel probability measure $\mu$. In addition, consider a random variable $X : M \to \mathbb{R}$ on the probability space $(M, \mathcal{B}, \mu)$, where $\mathcal{B}$ is the Borel $\sigma$-algebra on $M$, with $\prob(X \leq u) = \mu(X^{-1}(-\infty, u]))$. The sequence $X_i = X \circ f^i$ is identically distributed but not independent. Based on \cite{Leadbetter} the following two conditions were presented in \cite{FF:08b}:

\begin{definition}\label{def:dun}
The condition $D(u_n)$ holds for the sequences $(X_i)_{i=0}^\infty$ and
$(u_n)_{n=1}^\infty$ if for any integers $\ell,t,n\geq 1$ we have
\[
\begin{split}
\big|\prob (X_0 > u_n, & X_t \leq u_n,\dots,X_{t+\ell-1} \leq u_n)  \\
&  - \prob (X_0 > u_n) \prob(X_t \leq u_n,\dots,X_{t+\ell-1} \leq u_n)\big| \leq \gamma(n,t),
\end{split}
\]
where $\gamma (n,t)$ is non-increasing in $t$ for each $n$ and $n\gamma(n,t_n)\to 0$
as $n\to \infty$ for some sequence $t_n = o(n)$ as $t_n\rightarrow \infty$.
\end{definition}

\begin{definition}\label{def:dprime}
The condition $D'(u_n)$ holds for the sequences $(X_i)_{i=0}^\infty$ and
$(u_n)_{n=1}^\infty$ if 
\[
\lim_{k\to \infty}\bigg(\limsup_{n\to\infty} n\sum_{j=1}^{\lfloor n/k \rfloor}\prob(X_0>u_n,X_j>u_n)\bigg) = 0.
\]
\end{definition}

The $D(u_n)$ condition imposes a decay rate on the dependence of specific events concerning threshold exceedances. The $D'(u_n)$ condition restricts the amount of clustering of exceedances over a threshold. Under these two conditions the equivalence in \eqref{eq:equiv_leadbetter} remains true for the process $X_i = X \circ f^i$ \cite[Theorem 1]{FF:08b}. For the R\'enyi map process we will now show that $D(u_n)$ is satisfied, but $D'(u_n)$ is not.

It follows from \cite[Theorem 8.3.2]{BG:97} that the R\'enyi map has exponential decay of correlations. This means the following: for all functions $\varphi\in BV([0,1))$ and $\psi \in L^{\infty}([0,1))$ there exist constants $C>0$ and $0 < r < 1$ such that
\begin{equation*}\label{eq:decay}
\bigg| \int_0^1 \varphi \cdot (\psi \circ f^t)d\mu - \int_0^1 \varphi d\mu \int_0^1 \psi d\mu\bigg|
\leq
C \text{Var}(\varphi)\|\psi\|_\infty r^t \quad \text{for all}\quad t \geq 0.
\end{equation*}
By taking the indicator functions $\varphi = 1_{\{X_0>u_n\}}$ and $\psi = 1_{\{X_0 \leq u_n, \dots, X_{\ell-1} \leq u_n\}}$  it follows that the $D(u_n)$ condition is satisfied with $\gamma(n,t) = 2Cr^t$ and $t_n = n^\alpha$ for any $0 < \alpha < 1$.

Now we show that $D'(u_n)$ does not hold for any sequence $u_n$ that satisfies
\[
\lim_{n\to\infty} n\prob(X_0>u_n) = \lim_{n\to\infty} n(1-u_n) = \lambda > 0.
\]
To that end, observe that we have the following inclusion:
\[
(u_n,1) \cap f^{-j}((u_n,1)) \supset \bigg(1-\frac{1-u_n}{\beta^j}, 1\bigg).
\]
This gives the inequality
\[
\prob(X_0 > u_n, X_j > u_n) = \Leb\big((u_n,1) \cap f^{-j}((u_n,1))\big) \geq \Leb\bigg(1-\frac{1-u_n}{\beta^j}, 1\bigg) = \frac{1-u_n}{\beta^j},
\]
which implies that
\[
n \sum_{j=1}^{\lfloor n/k\rfloor} \prob(X_0 > u_n, X_j > u_n)
  \geq n(1-u_n) \sum_{j=1}^{\lfloor n/k\rfloor} \frac{1}{\beta^j}
  = n(1-u_n)\cdot\frac{1-\beta^{-\lfloor n/k\rfloor}}{\beta-1}.
\]
Finally, it follows that
\[
\lim_{k\to\infty} \bigg(\limsup_{n\to\infty} n \sum_{j=1}^{\lfloor n/k\rfloor} \prob(X_0 > u_n, X_j > u_n)\bigg)
\geq \frac{\lambda}{\beta-1} > 0,
\]
which shows that the $D'(u_n)$ condition is not satisfied.


\section{Clustering and the extremal index}
\label{sec:clustering}

Extremes in the R\'enyi map process can form clusters. Let $u = \beta^{-k}$ for some $k \in \mathbb{N}$. The probability of having a cluster of $q$ consecutive variables $X_i$ exceeding the threshold $1-u$ is given by
\[
\frac{\prob(X_0,\dots,X_{q-1}>1-u, X_q \leq 1-u)}{\prob(X_0 > 1-u)}
=
\frac{\Leb(E_0 \cap \dots \cap E_{q-1} \cap E_q^c)}{\Leb(E_0)}.
\]
Observe that
\[
E_0 \cap \dots \cap E_{q-1} = \bigg[\frac{\beta^{q-1}-u}{\beta^{q-1}}, 1\bigg)
\quad\text{and}\quad
E_q^c = \bigcup_{j=1}^{\beta^q} \bigg[\frac{j-1}{\beta^q},\frac{j-u}{\beta^q}\bigg),
\]
which implies that
\[
E_0 \cap \dots \cap E_{q-1} \cap E_q^c = \bigg[\frac{\beta^{q-1}-u}{\beta^{q-1}},\frac{\beta^q-u}{\beta^q}\bigg).
\]
Hence, the probability of the occurrence of a cluster of length $q$ is given by
\[
\frac{\prob(X_0,\dots,X_{q-1}>1-u, X_q \leq 1-u)}{\prob(X_0 > 1-u)} = \frac{\beta-1}{\beta}\cdot\frac{1}{\beta^{q-1}}.
\]
Using that $1 + 2x + 3x^2 + \dots = 1/(1-x)^2$ for $|x|<1$ implies that the mean cluster size is given by
\[
\mean(\text{cluster size})
 = \sum_{q=1}^\infty q\prob(\text{cluster of size } q)
 = \frac{\beta-1}{\beta}\sum_{q=1}^\infty \frac{q}{\beta^{q-1}}
 = \frac{\beta-1}{\beta}\bigg(\frac{\beta}{\beta-1}\bigg)^2
 = \frac{\beta}{\beta-1}.
\]
Finally, by taking the reciprocal of the mean cluster size we obtain the extremal index $\theta = (\beta-1)/\beta$. It is precisely the clustering of extremes which violates the $D'(u_n)$ condition that was discussed in the previous section.



\begin{thebibliography}{10}

\bibitem{Abadi:04}
M.~Abadi.
\newblock Sharp error terms and neccessary conditions for exponential hitting
  times in mixing processes.
\newblock {\em The Annals of Probability}, 32:243--264, 2004.

\bibitem{Addabbo-et-al:07}
T.~Addabbo, M.~Alioto, A.~Fort, A.~Pasini, S.~Rocchi, and V.~Vignoli.
\newblock A class of maximum-period nonlinear congruential generators derived
  from the {R\'enyi} chaotic map.
\newblock {\em IEEE Transactions on Circuits and Systems}, 54:816--828, 2007.

\bibitem{BG:97}
A.~Boyarsky and P.~G\'ora.
\newblock {\em Laws of Chaos: Invariant Measures and Dynamical Systems in One
  Dimension}.
\newblock Birkh\"auser, 1997.

\bibitem{BroerTakens:11}
H.W. Broer and F.~Takens.
\newblock {\em Dynamical Systems and Chaos}, volume 172 of {\em Applied
  Mathematical Sciences}.
\newblock Springer, 2011.

\bibitem{Choe:05}
G.H. Choe.
\newblock {\em Computational Ergodic Theory}.
\newblock Springer, 2005.

\bibitem{Collet:01}
P.~Collet.
\newblock Statistics of closest return for some non-uniformly hyperbolic
  systems.
\newblock {\em Ergodic Theory and Dynamical Systems}, 21:401--420, 2001.

\bibitem{DresdenDu2014}
G.P.B. Dresden and Z.~Du.
\newblock A simplified {Binet} formula for the $k$-generalized {Fibonacci}
  numbers.
\newblock {\em Journal of Integer Sequences}, 17:Article 14.4.7, 2014.

\bibitem{Freitas:09}
A.C.M. Freitas.
\newblock Statistics of the maximum of the tent map.
\newblock {\em Chaos, Solitons and Fractals}, 42:604--608, 2009.

\bibitem{FF:08b}
A.C.M. Freitas and J.M. Freitas.
\newblock On the link between dependence and independence in extreme value
  theory for dynamical systems.
\newblock {\em Statistics and Probability Letters}, 78:1088--1093, 2008.

\bibitem{FFT:10}
A.C.M. Freitas, J.M. Freitas, and M.~Todd.
\newblock Hitting time statistics and extreme value theory.
\newblock {\em Probability Theory and Related Fields}, 146:675--710, 2010.

\bibitem{FFT:12}
A.C.M. Freitas, J.M. Freitas, and M.~Todd.
\newblock The extremal index, hitting time statistics and periodicity.
\newblock {\em Advances in Mathematics}, 231:2626--2665, 2012.

\bibitem{Galambos}
J.~Galambos.
\newblock {\em The Asymptotic Theory of Extreme Order Statistics}.
\newblock Wiley, 1978.

\bibitem{HF:2006}
L.~de Haan and A.~Ferreira.
\newblock {\em Extreme Value Theory. An Introduction}.
\newblock Springer, 2006.

\bibitem{Haiman:03}
G.~Haiman.
\newblock Extreme values of the tent map process.
\newblock {\em Statistics and Probability Letters}, 65:451--456, 2003.

\bibitem{Haiman:2018}
G.~Haiman.
\newblock Level hitting probabilities and extremal indexes for some particular
  dynamical systems.
\newblock {\em Methodology and Computing in Applied Probability}, 20:553--562,
  2018.

\bibitem{HLV:05}
N.~Haydn, Y.~Lacroix, and S.~Vaienti.
\newblock Hitting and return times in ergodic dynamical systems.
\newblock {\em The Annals of Probability}, 33:2043--2050, 2005.

\bibitem{HV:09}
N.~Haydn and S.~Vaienti.
\newblock The compound {Poisson} distribution and return times in dynamical
  systems.
\newblock {\em Probability Theory and Related Fields}, 144:517–542, 2009.

\bibitem{Hirata:93}
M.~Hirata.
\newblock Poisson law for {Axiom A} diffeomorphisms.
\newblock {\em Ergodic Theory and Dynamical Systems}, 13:533–556, 1993.

\bibitem{HSV:99}
M.~Hirata, B.~Saussol, and S.~Vaienti.
\newblock Statistics of return times: a general framework and new applications.
\newblock {\em Communications in Mathematical Physics}, 206:33--55, 1999.

\bibitem{KKLL:2019}
C.~Kalle, D.~Kong, N.~Langeveld, and W.~Li.
\newblock The $\beta$-transformation with a hole at $0$.
\newblock {\em Ergodic Theory and Dynamical Systems}, 40:2482--2514, 2020.

\bibitem{Leadbetter}
M.R. Leadbetter, G.~Lindgren, and H.~Rootz\'en.
\newblock {\em Extremes and Related Properties of Random Sequences and
  Processes}.
\newblock Springer-Verlag, 1980.

\bibitem{Levesque:85}
C.~Levesque.
\newblock On $m$-th order linear recurrences.
\newblock {\em The Fibonacci Quarterly}, 23:290--293, 1985.

\bibitem{Lucarini2016}
V.~Lucarini, D.~Faranda, A.C.M. Freitas, J.M. Freitas, M.~Holland, T.~Kuna,
  M.~Nicol, M.~Todd, and S.~Vaienti.
\newblock {\em Extremes and Recurrence in Dynamical Systems}.
\newblock Wiley, 2016.

\bibitem{Miles:60}
E.P. {Miles, Jr.}
\newblock Generalized {Fibonacci} numbers and associated matrices.
\newblock {\em The American Mathematical Monthly}, 67:745--752, 1960.

\bibitem{Miller:71}
M.D. Miller.
\newblock On generalized {Fibonacci} numbers.
\newblock {\em The American Mathematical Monthly}, 78:1108--1109, 1971.

\bibitem{Renyi:57}
A.~R\'enyi.
\newblock Representations for real numbers and their ergodic properties.
\newblock {\em Acta Mathematica Academiae Scientiarum Hungarica}, 8:477--493,
  1957.

\bibitem{Res87}
S.I. Resnick.
\newblock {\em Extreme Values, Regular Variation, and Point Processes},
  volume~4 of {\em Applied Probability}.
\newblock Springer-Verlag, New York, 1987.

\bibitem{Rousseau:14}
J.~Rousseau.
\newblock Hitting time statistics for observations of dynamical systems.
\newblock {\em Nonlinearity}, 27:2377--2392, 2014.

\bibitem{SpickermanJoyner:84}
W.R. Spickerman and R.N. Joyner.
\newblock Binet's formula for the recursive sequence of order $k$.
\newblock {\em The Fibonacci Quarterly}, 22:327--331, 1984.

\bibitem{Wolfram:98}
D.A. Wolfram.
\newblock Solving generalized {Fibonacci} recurrences.
\newblock {\em The Fibonacci Quarterly}, 36:129--145, 1998.

\end{thebibliography}


\end{document}